\DeclareMathOperator{\sgn}{sgn}
\let\mathnumsetfont\mathbb
\newcommand\Rset{\mathnumsetfont R} 
\newcommand\M{{\cal M}}
\newtheorem{teo}{Theorem}
\newtheorem{teo*}{Theorem}
\newtheorem*{prop*}{Proposition CK}
\newtheorem{cor}{Corollary}
\def\limsup{\mathop{\overline{\mathrm{lim}}}}
\def\liminf{\mathop{\underline{\mathrm{lim}}}}
\def\limx{\lim_{x\rightarrow\infty}}
\def\limxi{\lim_{\xi\rightarrow\infty}}
\def\limpsi{\lim_{\psi\rightarrow\infty}}
\def\limsupx{\limsup_{x\rightarrow\infty}}
\def\liminfx{\liminf_{x\rightarrow\infty}}
\def\limsuppsi{\limsup_{\psi\rightarrow\infty}}
\title{A note on Tauberian Theorems of Exponential Type}
\author{Meitner Cadena\thanks{UPMC Paris 6 \& CREAR, ESSEC Business School;\, E-mail: meitner.cadena@etu.upmc.fr, b00454799@essec.edu, or meitner.cadena@gmail.com}} 
\date{}
\begin{document}

\maketitle

\begin{abstract}
Tauberian Theorems of exponential type provided by Kohlbecker, de Bruijn, and Kasahara are proved in only one Tauberian theorem.
To this aim, the structure of those classical tauberian theorems is identified and, using a relationship recently proved by Cadena and Kratz, the relationships among its components are given.

\vspace{2ex}

{\it Keywords: exponential Tauberian theorems; Laplace transform; regularly varying function; large deviations}

\vspace{2ex}

{\it AMS classification}:  40E05; 26A12; 44A10; 60F10
\end{abstract}

\section{Motivation and main results}

The Tauberian theorems of exponential type given by Kohlbecker, de Bruijn, and Kasahara appeared in 1958, 1959, and 1978, respectively.
They concern equivalences between the logarithm of functions and the logarithm of their Laplace transforms when these two logarithms behave as regularly varying functions.
These theorems are closely related among them and hence their proofs may follow a same structure (see for instance \S 4.12 of \cite{BinghamGoldieTeugels}).
Nevertheless these relationships, these three theorems are often presented independently.
For a survey on these theorems see for instance \cite{BinghamGoldieTeugels}.

We aim to unify these theorems in an only one.
This new presentation gives a general view of these classical results.
As noticed by Bingham et al., a result of this kind was already given by de Bruijn in \cite{deBruijn1959}.
However, our proof is different from that given by this author because the structure of these tauberian theorems is revealed and the interplay among its components is showed.

The Tauberian theorems of exponential type involve regularly varying (RV) functions.
A measurable function $U:\Rset^+\to\Rset^+$ is RV with index $\alpha\in\Rset$ if, for $t>0$, $U(xt)\sim U(x)t^\alpha$ ($x\to\infty$), 
where $f(x)\sim g(x)$ ($x\to x_0$) means $f(x)\big/g(x)\to1$ as $x\to x_0$.
The class of RV functions of index $\alpha$ is denoted by $\textrm{RV}_\alpha$.
If $\alpha=0$, then $U$ is slowly varying (SV).

It follows our main result.

\begin{teo}\label{teo:main:001}~
Let $a,b\in\Rset$ such that $ab(b-1)<0$.
Let $c\in\Rset$ such that $abc<0$.
Let $d:=a(1-b)\big(-ab\big/c\big)^{b/(b-1)}$.
Assume that $P(u)$ is a real function, that $\displaystyle \int_0^rP(u)du$ exists in the Lebesgue sense for every positive $r$, and that $\displaystyle \int_0^\infty P(u)du$ converges if $b<0$.
Put $\displaystyle f(s):=A+\int_0^{\infty}P(us)e^{c u}du$ for some real $A\in\Rset$ such that $A=0$ if $d<0$.
Then
\begin{equation}\label{eq:20150316:001}
\log(P(x))\sim a x^{b}\quad x^b\to\infty
\end{equation}
iff
\begin{equation}\label{eq:20150316:002}
\log(f(\lambda))\sim d\lambda^{b/(1-b)}\quad (\lambda\rightarrow\infty).
\end{equation}
\end{teo}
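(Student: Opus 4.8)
\emph{Overall strategy.} The plan is to treat the three sign-regimes of $b$ — namely $b>1$, $0<b<1$, $b<0$ — in a single stroke by exploiting the device ``$x^b\to\infty$'' (which means $x\to\infty$ when $b>0$ and $x\to0^+$ when $b<0$; in the latter case I would first substitute $x\mapsto 1/x$ to pass to a positive exponent). Note first that the hypotheses are internally consistent: $ab(b-1)<0$ fixes the sign of $a$ once the location of $b$ is known, and $abc<0$ forces $-ab/c>0$, so that $d$ and the exponent $b/(1-b)$ are well defined, while simultaneously making the defining integral of $f$ converge — $e^{cu}$ and $P(u)\asymp e^{au^b}$ pull in opposite directions. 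Writing $g:=\log P$, hypothesis \eqref{eq:20150316:001} says $g(x)\sim a x^{b}$ on the relevant end, i.e.\ $g$ is, to leading order, a power regularly varying function. After the substitution $v=us$ one has $f(s)=A+\frac1s\int_0^\infty P(v)e^{cv/s}\,dv$, a Laplace-type transform of $P$ evaluated at the small parameter $c/s$. The engine of the proof is the general fact that the logarithm of such a transform of $e^{g}$ is, to leading order, the Young–Legendre conjugate of $g$, and that the map carrying the pair $(b,a)$ to the pair $\big(b/(1-b),\,d\big)$ is precisely the one built into the statement through $d:=a(1-b)(-ab/c)^{b/(b-1)}$; one must check this map is compatible with being run backwards, so that \eqref{eq:20150316:001}$\Leftrightarrow$\eqref{eq:20150316:002} rather than merely the forward implication. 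The Cadena–Kratz relationship enters as the bridge allowing one to pass between the $\log$-level statements \eqref{eq:20150316:001}, \eqref{eq:20150316:002} and the statements directly about $P$ and about $\int_0^{x}P$ that are needed inside the transform.

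\emph{Direction \eqref{eq:20150316:001}$\Rightarrow$\eqref{eq:20150316:002}.} Fix $\varepsilon>0$. From \eqref{eq:20150316:001}, on the range where $x^b$ is large we have $e^{(a-\varepsilon)x^b}\le P(x)\le e^{(a+\varepsilon)x^b}$ (orienting the inequalities by the sign of $a$), while on the complementary bounded range $P$ is controlled by the hypothesis that $\int_0^r P$ exists for every $r$. Inserting these bounds into $\int_0^\infty P(us)e^{cu}\,du$ and applying Laplace's method, the exponent $a's^b u^b+cu$ (with $a'=a\pm\varepsilon$) has, \emph{because} $ab(b-1)<0$, a unique interior maximum at $u_\ast\asymp s^{b/(1-b)}$, and its value equals $(d+O(\varepsilon))\,s^{b/(1-b)}$ after the elementary algebraic simplification of the coefficient. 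It then remains to verify that the additive constant $A$ and the polynomial prefactor contributed by the width of the Laplace peak do not disturb the claimed leading order: this is immediate when the peak value diverges (the regime $0<b<1$, i.e.\ Kohlbecker), and uses the constraint $A=0$ imposed when $d<0$ together with the convergence of $\int_0^\infty P$ assumed when $b<0$ to keep $\log f(\lambda)$ on the correct scale in the remaining regimes. Letting $\varepsilon\downarrow0$ gives \eqref{eq:20150316:002}.

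\emph{Direction \eqref{eq:20150316:002}$\Rightarrow$\eqref{eq:20150316:001}, and the main obstacle.} This is the genuinely Tauberian half, and the obstruction is that $P$ is not assumed monotone, so one cannot simply differentiate the transform asymptotics. The plan is to use the Cadena–Kratz relationship to convert \eqref{eq:20150316:002} — which asserts that $\log f$ is regularly varying with index $b/(1-b)$ — into information about the \emph{monotone} object $x\mapsto\int_0^x P(u)\,du$ (monotone by the integrability hypothesis, and interpolating $P$), and then to invoke the classical exponential Tauberian theorem in its Tauberian direction — de Bruijn's, or equivalently the Kohlbecker/Kasahara version appropriate to the value of $b$ — to recover $\log P(x)\sim a x^{b}$; running the conjugation bijection of the first paragraph backwards produces exactly the exponent $b$ and coefficient $a$ out of $b/(1-b)$ and $d$. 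I expect the arithmetic bookkeeping across the three ranges of $b$ (and, for $b<0$, the reduction $x\mapsto1/x$) to be the main technical nuisance, whereas the conceptual crux — and the step on which I would spend most effort — is precisely this recovery of pointwise log-asymptotics for a non-monotone $P$ from the transform side, which is where the Cadena–Kratz result is indispensable.
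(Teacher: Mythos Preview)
Your forward direction \eqref{eq:20150316:001}$\Rightarrow$\eqref{eq:20150316:002} is in the right spirit and matches the paper: both arguments sandwich $P$ between $e^{(a\pm\varepsilon)x^b}$ and locate the dominant contribution of $\int_0^\infty P(us)e^{cu}\,du$ at the interior maximiser of $au^b s^b+cu$, the concavity being guaranteed precisely by $ab(b-1)<0$. The paper packages the conclusion via Proposition~CK (showing $f((\log\xi)^{(1-b)/b})/\xi^{d\pm\epsilon}\to 0,\infty$ and then reading off $\log f/\log\xi\to d$), but that is cosmetic.

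The Tauberian direction \eqref{eq:20150316:002}$\Rightarrow$\eqref{eq:20150316:001}, however, has two real problems. First, your plan is circular: you propose to ``invoke the classical exponential Tauberian theorem \ldots\ de~Bruijn's, or equivalently the Kohlbecker/Kasahara version appropriate to the value of $b$''. But those three theorems are exactly the corollaries the present theorem is meant to establish; using them as black boxes inside the proof defeats the purpose of the paper and is not an independent argument. Second, you misread what the Cadena--Kratz relationship does. Proposition~CK is simply the equivalence
\[
\Big(\forall\epsilon>0:\ U(x)/x^{\tau+\epsilon}\to0\ \text{and}\ U(x)/x^{\tau-\epsilon}\to\infty\Big)
\quad\Longleftrightarrow\quad
\frac{\log U(x)}{\log x}\to\tau;
\]
it does not produce information about the cumulative $\int_0^x P$, nor does it supply monotonicity. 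Even if it did, passing from asymptotics of $\int_0^x P$ back to pointwise asymptotics of $\log P$ is itself a Tauberian step for which you offer no mechanism.

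The paper's route for this half is entirely different and self-contained. Writing $Q=\log P$ and using CK on $f$ to obtain, for every $\epsilon>0$,
\[
\lim_{\psi\to\infty}\psi\int_0^\infty e^{(Q(v\psi^{1/b})/\psi+cv-d\mp\epsilon)\psi}\,dv\ =\ 0\ \text{resp.}\ \infty,
\]
one argues directly that (i) $Q(v\psi^{1/b})/\psi+cv-d\le 0$ for all $v>0$ (else the first limit would blow up on a small interval), and (ii) equality must be attained at some $v_0$ (else one contradicts \eqref{eq:20150316:001} via the bound $Q(z)<\big((d-cv)/v^b\big)z^b$ optimised in $v$). From (i)--(ii) one extracts that $v\mapsto Q(v\psi^{1/b})/\psi+cv-d$ is concave with its unique zero-maximum at $v_0=x_M$, which forces $Q(z)=az^b$. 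No appeal to Kohlbecker, de~Bruijn or Kasahara is made; CK is used only as the translator between the $\log$-ratio statement \eqref{eq:20150316:002} and the pair of $\xi^{d\pm\epsilon}$ limits. This pointwise ``squeeze at the saddle'' is the idea your proposal is missing.
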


A relationship provided by Cadena and Kratz \cite{NN2014} is used to prove this result.
For the sake of completeness of this note, we give this relationship as Proposition CK and indicate its proof in appendix.
Part of this proof is copied from \cite{NN2014}.
Our main result is discussed in the last section.

Note that in Theorem \ref{teo:main:001} we use simple forms of RV functions.
They are $\phi\in\textrm{RV}_\alpha$ such that $\phi(x)=x^\alpha$ as $x\to\infty$.
In what follows we use this kind of functions only.
Hence, SV functions are assumed $L(x)=1$ as $x\to\infty$.

It follows the application of our theorem to prove the Tauberian theorems given by Kohlbecker, de Bruijn, and Kasahara.


\begin{cor}[Kohlbecker's Tauberian theorem \cite{Kohlbecker1958}, version given by Bingham et al. \cite{BinghamGoldieTeugels}, pp. 247]
Let $\mu$ be a measure on $\Rset$, supported by $[0;\infty)$ and finite on compact sets.
Let
$$
M(\lambda):=\int_{[0;\infty)}e^{-x/\lambda}d\mu(x)\quad(\lambda>0).
$$
Let $\alpha>1$, $B>0$. 
Then
$$
\log(\mu[0;x])\sim Bx^{1/\alpha}\quad (x\rightarrow\infty)
$$
iff
$$
\log(M(\lambda))\sim (\alpha-1)(B/\alpha)^{\alpha/(\alpha-1)}\lambda^{1/(\alpha-1)}\quad (\lambda\rightarrow\infty).
$$
\end{cor}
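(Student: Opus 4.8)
The plan is to recover this statement as the special case of Theorem~\ref{teo:main:001} in which the function $P$ is the distribution function $P(x):=\mu[0;x]$ of $\mu$. The first step is to rewrite $M$ in the integral form demanded by the theorem. Since $\mu$ is finite on compact sets, $P$ is finite, nondecreasing and right-continuous with $P(x)=0$ for $x<0$; applying Tonelli's theorem to $M(\lambda)=\int_{[0;\infty)}\int_0^{e^{-x/\lambda}}dt\,d\mu(x)$, which yields $M(\lambda)=\int_0^1\mu[0;-\lambda\log t]\,dt$, and then substituting $t=e^{-u}$ gives
\begin{equation*}
M(\lambda)=\int_0^\infty \mu[0;\lambda u]\,e^{-u}\,du=\int_0^\infty P(\lambda u)\,e^{-u}\,du .
\end{equation*}
(The same identity also follows from integration by parts in the Stieltjes integral defining $M$, the boundary term being negligible because, in either direction of the equivalence, $\log P(x)=o(x)$: when $\log(\mu[0;x])\sim Bx^{1/\alpha}$ this is immediate, and when $M(\lambda)<\infty$ for all $\lambda>0$ it follows from $P(x)\le M(\lambda)e^{x/\lambda}$.) Hence $M=f$ with the choices $c:=-1$ and $A:=0$.

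It then remains to put $a:=B$ and $b:=1/\alpha$ and to check the hypotheses of Theorem~\ref{teo:main:001}. Because $\alpha>1$ we have $0<b<1$, so $ab(b-1)=(B/\alpha)(1/\alpha-1)<0$ and $abc=-B/\alpha<0$, as required; $P=\mu[0;\cdot]$ being nondecreasing and finite, $\int_0^rP(u)\,du$ exists in the Lebesgue sense for each $r>0$; and the hypothesis on $\int_0^\infty P(u)\,du$ is vacuous since $b>0$. Substituting into the definitions and simplifying yields $b/(1-b)=1/(\alpha-1)$ and $d=(\alpha-1)(B/\alpha)^{\alpha/(\alpha-1)}>0$, so in particular $d>0$ and the requirement that $A=0$ when $d<0$ is satisfied.

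With these identifications, \eqref{eq:20150316:001} becomes $\log(\mu[0;x])\sim Bx^{1/\alpha}$, and since $b>0$ the condition ``$x^b\to\infty$'' coincides with ``$x\to\infty$''; likewise \eqref{eq:20150316:002} becomes $\log(M(\lambda))\sim(\alpha-1)(B/\alpha)^{\alpha/(\alpha-1)}\lambda^{1/(\alpha-1)}$ as $\lambda\to\infty$. The equivalence provided by Theorem~\ref{teo:main:001} is then exactly the one asserted in the corollary. I expect the only genuinely delicate point to be this first step---bringing $M$ into the form $\int_0^\infty P(\lambda u)e^{-u}\,du$ and justifying that the boundary term of the integration by parts (equivalently, the interchange of integrals) is harmless in both directions of the ``iff''; the remaining work is the routine bookkeeping of matching the parameters and simplifying the constant $d$.
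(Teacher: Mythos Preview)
Your proposal is correct and follows the same route as the paper: rewrite $M(\lambda)$ via integration by parts (you also supply a Tonelli argument and a justification for the boundary term) as $\int_0^\infty P(\lambda u)e^{-u}\,du$ with $P(x)=\mu[0;x]$, then match parameters and invoke Theorem~\ref{teo:main:001}. One small point worth noting: the paper's proof writes ``$b=\alpha$'', which is evidently a slip for $b=1/\alpha$ as you have it, since only the latter satisfies the hypothesis $ab(b-1)<0$ and produces the stated constant $d=(\alpha-1)(B/\alpha)^{\alpha/(\alpha-1)}$.
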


\begin{proof}~
By integration by parts $M(\lambda)$ may be rewritten as, using the change of variable $y=x\big/\lambda$, $\displaystyle M(\lambda)=\int_0^\infty e^{-y}\mu\big[0;y\lambda\big]dy$.
Taking $a=B$, $b=\alpha$, and $c=-1$, gives $d=(\alpha-1)(B/\alpha)^{\alpha/(\alpha-1)}$ ($>0$), and putting $P(x)=\mu[0;x]$ and $f=M$ with $A=0$, applying Theorem \ref{teo:main:001}, the corollary then follows.
\end{proof}


As mentioned above, de Bruijn's Tauberian theorem tackled all of three Tauberian theorems of exponential type reviewed in this note.
In order to distinguish the case not concerned in the results of Kohlbecker and Kasahara, in what follows we call this case de Bruijn's Tauberian theorem, as often found in the literature (see for instance \cite{BinghamGoldieTeugels}, \cite{Nane2007}, and \cite{ShanbhagRao}).

\begin{cor}[de Bruijn's Tauberian theorem, \cite{deBruijn1959}, Theorem 2]\label{deBruijn}~
Let $A>0$.
Assume that $P(u)$ is a real function and that $\displaystyle M(\lambda):=\lambda\int_0^{\infty}P(x)e^{-\lambda A x}dx$ converges for all $\lambda>0$.
If $\beta<0$, then for $B<0$,
$$
\log\big(P\big(1\big/x\big)\big)\sim Bx^{-\beta}\quad (x\to\infty)
$$
iff
$$
\log(M(\lambda))\sim B(1-\beta)\left(\frac{\lambda}{B\beta}\right)^{\beta/(\beta-1)}\quad (\lambda\rightarrow\infty).
$$
\end{cor}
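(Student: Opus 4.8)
The plan is to read Corollary~\ref{deBruijn} off Theorem~\ref{teo:main:001} after putting the Laplace integral into the standard shape. Starting from $M(\lambda)=\lambda\int_0^{\infty}P(x)e^{-\lambda Ax}\,dx$, the change of variable $x=y/\lambda$ gives $M(\lambda)=\int_0^{\infty}P(y/\lambda)e^{-Ay}\,dy=\int_0^{\infty}P(ys)e^{cy}\,dy$ with $c=-A$ and $s=1/\lambda$; hence $M(\lambda)=f(1/\lambda)$ for the function $f$ of Theorem~\ref{teo:main:001} taken with its additive constant equal to $0$. Observe that $s=1/\lambda\to0^{+}$ as $\lambda\to\infty$, and that this is precisely the regime in which the power $s^{b/(1-b)}$ occurring in \eqref{eq:20150316:002} diverges when $b<0$; keeping track of this inversion is the book-keeping point that needs care.

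Next I would fix the parameters. Putting $t=1/x$, the hypothesis $\log\bigl(P(1/x)\bigr)\sim Bx^{-\beta}$ $(x\to\infty)$ is exactly $\log\bigl(P(t)\bigr)\sim Bt^{\beta}$ as $t\to0^{+}$, and since $\beta<0$ the statement ``$x^{b}\to\infty$'' in \eqref{eq:20150316:001} means $x\to0^{+}$; so this is \eqref{eq:20150316:001} with $a=B$ and $b=\beta$. With $c=-A$ one verifies the standing conditions of Theorem~\ref{teo:main:001}: $ab(b-1)=B\beta(\beta-1)<0$, because $B<0$, $\beta<0$ and $\beta-1<0$; $abc=-AB\beta<0$; and $d=a(1-b)\bigl(-ab/c\bigr)^{b/(b-1)}$ comes out strictly negative, being the product of the negative number $a(1-b)=B(1-\beta)$ with a positive power of the positive quantity $-ab/c$. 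Since $d<0$, Theorem~\ref{teo:main:001} is applied with additive constant $0$, which matches the form of $M$ already obtained. One must also check the integrability requirements of the theorem, namely that $\int_0^{r}P$ exists in the Lebesgue sense for every $r>0$ and, because $b=\beta<0$, that $\int_0^{\infty}P$ converges; these follow from the assumed convergence of $M(\lambda)$ for all $\lambda>0$ together with the rapid decay of $P$ near the origin forced by $\log P(t)\sim Bt^{\beta}$ with $B<0$.

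With these points settled, Theorem~\ref{teo:main:001} gives \eqref{eq:20150316:001}$\Longleftrightarrow$\eqref{eq:20150316:002}: $\log P(t)\sim Bt^{\beta}$ $(t\to0^{+})$ if and only if $\log f(s)\sim d\,s^{b/(1-b)}$ in that same regime. Substituting $s=1/\lambda$ and $M(\lambda)=f(1/\lambda)$ turns the second relation into $\log M(\lambda)\sim d\,\lambda^{-b/(1-b)}$ as $\lambda\to\infty$, and since $b/(1-b)=\beta/(1-\beta)=-\beta/(\beta-1)$ this is $\log M(\lambda)\sim d\,\lambda^{\beta/(\beta-1)}$; a short rearrangement of the explicit value of $d$ then recasts the constant into the form $B(1-\beta)\bigl(\lambda/(B\beta)\bigr)^{\beta/(\beta-1)}$ asserted in the corollary, completing the argument.

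The part I expect to require the actual work is not any analytic estimate --- these all sit inside Theorem~\ref{teo:main:001} and Proposition~CK --- but the sign-sensitive book-keeping: making sure that, because $b=\beta<0$, the equivalence \eqref{eq:20150316:002} is invoked in the limiting regime where the relevant power blows up (which, through $s=1/\lambda$, is $\lambda\to\infty$), that the change of variable has not left behind a stray multiplicative constant that would spoil the limit constant, and that the explicit $d$ genuinely collapses to $B(1-\beta)\bigl(\,\cdot\,/(B\beta)\bigr)^{\beta/(\beta-1)}$ once the various powers are combined.
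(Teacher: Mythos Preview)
Your proposal is correct and follows essentially the same route as the paper: rewrite $M(\lambda)$ via the substitutions $y=\lambda x$ and $s=1/\lambda$ as $\int_0^{\infty}P(ys)e^{-Ay}\,dy$, then apply Theorem~\ref{teo:main:001} with $a=B$, $b=\beta$, $c=-A$ and additive constant $0$ (since $d<0$). You are in fact more explicit than the paper in checking the sign hypotheses $ab(b-1)<0$, $abc<0$ and in tracking that, because $b=\beta<0$, the relevant regime for the variable of $f$ in \eqref{eq:20150316:002} corresponds to $s=1/\lambda\to0^{+}$, i.e.\ $\lambda\to\infty$.
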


\begin{proof}~
Using the changes of variables $y=\lambda x$ and $s=1\big/\lambda$, $\displaystyle M\big(1\big/s\big)=\int_0^\infty e^{-y}P(sy)dy$.
Taking $a=B$, $b=\beta$, and $c=-A$, gives $d=B\big(1-\beta\big)\big(A\big/(B\beta)\big)^{\beta/(\beta-1)}$ ($<0$), and taking $f$ as $f(1\big/\lambda)$ with $A=0$, applying Theorem \ref{teo:main:001}, the corollary then follows.
\end{proof}



\begin{cor}[Kasahara's Tauberian theorem \cite{Kasahara1978}, version given by Bingham et al. \cite{BinghamGoldieTeugels}, pp. 253]\label{teo:KasaharasTauberianTheorem:001}
Suppose $\mu$ be a measure on $(0;\infty)$ such that
$
\displaystyle M(\lambda):=\int_0^\infty e^{\lambda x}d\mu(x)<\infty
$
for all $\lambda>0$.
Let $0<\alpha<1$. 
Then, for $B>0$,
$$
\log \mu\big(x;\infty\big)\sim -Bx^{1/\alpha}\ (<0)\quad (x\rightarrow\infty)
$$
iff
$$
\log(M(\lambda))\sim (1-\alpha)(\alpha/B)^{\alpha/(1-\alpha)}\lambda^{1/(1-\alpha)}\quad (\lambda\rightarrow\infty).
$$
\end{cor}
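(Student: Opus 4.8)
The plan is to obtain the statement from Theorem~\ref{teo:main:001} along the same lines as the corollaries of Kohlbecker and de~Bruijn: put $M$ into the form of the transform appearing in the theorem, read off the constants $a,b,c$, check the structural hypotheses, and translate the two equivalences back. The function playing the role of $P$ will be the tail $\overline{\mu}(x):=\mu(x,\infty)$, which is the object the hypothesis and the conclusion speak about; incidentally $\overline{\mu}(x)\to0$, so $\log\overline{\mu}(x)<0$ for large $x$, consistently with the ``$(<0)$'' in the statement.

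First I would rewrite $M$. From $e^{\lambda x}=1+\lambda\int_0^x e^{\lambda t}\,dt$ and Tonelli's theorem one gets
$$
M(\lambda)=\mu(0,\infty)+\lambda\int_0^\infty e^{\lambda t}\,\overline{\mu}(t)\,dt ,
$$
all terms being finite since $\mu(0,\infty)\le M(\lambda)<\infty$. The change of variable $u=\lambda t$ followed by $s=1/\lambda$, exactly as in the de~Bruijn corollary, turns this into
$$
f(s):=M(1/s)=\mu(0,\infty)+\int_0^\infty \overline{\mu}(us)\,e^{u}\,du ,
$$
which is of the form $A+\int_0^\infty P(us)e^{cu}\,du$ with $P=\overline{\mu}$, $c=1$, $A=\mu(0,\infty)$. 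Matching the hypothesis $\log\overline{\mu}(x)\sim -Bx^{1/\alpha}$ against \eqref{eq:20150316:001}, I would take $a=-B$ and $b=1/\alpha$; since $0<\alpha<1$ one has $b>1$, so ``$x^b\to\infty$'' means $x\to\infty$, which is the right regime.

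Next I would verify the hypotheses of Theorem~\ref{teo:main:001} and translate the conclusion. Here $ab(b-1)<0$ and $abc<0$ because $a<0$ while $b$, $b-1$ and $c$ are positive; $\int_0^r\overline{\mu}(u)\,du$ exists for every $r>0$ since $\overline{\mu}$ is nonincreasing and finite; the clause asking $\int_0^\infty P(u)\,du$ to converge is void because $b>0$; and the integral defining $f$ is finite for each $s>0$, being $M(1/s)-\mu(0,\infty)$. A short computation gives $d=(1-\alpha)(\alpha/B)^{\alpha/(1-\alpha)}$ $(>0)$, so the nonzero value $A=\mu(0,\infty)$ is admissible. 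The theorem then says that \eqref{eq:20150316:001}, namely $\log\mu(x,\infty)\sim -Bx^{1/\alpha}$ $(x\to\infty)$, is equivalent to \eqref{eq:20150316:002}, namely $\log f(\lambda)\sim d\,\lambda^{b/(1-b)}$, i.e.\ $\log M(1/\lambda)\sim d\,\lambda^{-1/(1-\alpha)}$; undoing $s=1/\lambda$ this reads $\log M(\lambda)\sim d\,\lambda^{1/(1-\alpha)}=(1-\alpha)(\alpha/B)^{\alpha/(1-\alpha)}\lambda^{1/(1-\alpha)}$ as $\lambda\to\infty$, which is the claimed equivalence.

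I do not anticipate a genuine obstacle. The two points calling for some care are the justification of the Tonelli identity for $M$ (finiteness of $\mu(0,\infty)$, which follows from $M(\lambda)<\infty$, and the fact that no boundary term appears in this formulation) and the bookkeeping around the reciprocal substitution $s=1/\lambda$, which must be inverted correctly so that the exponent of $\lambda$ ends up as $+1/(1-\alpha)$ and not its negative --- the same point that already arises in the de~Bruijn corollary. Once $a=-B$, $b=1/\alpha$, $c=1$ have been fixed, everything else is routine computation.
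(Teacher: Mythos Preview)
Your proposal is correct and matches the paper's proof: both rewrite $M(\lambda)=\mu(0,\infty)+\int_0^\infty e^{u}\,\overline{\mu}(u/\lambda)\,du$ (the paper calls this integration by parts, you phrase it via Tonelli), take $P=\overline{\mu}$, $a=-B$, $b=1/\alpha$, $c=1$, $A=\mu(0,\infty)$, compute $d=(1-\alpha)(\alpha/B)^{\alpha/(1-\alpha)}>0$, and invoke Theorem~\ref{teo:main:001}. You are in fact more explicit than the paper about the reciprocal substitution --- the paper writes simply ``$f=M$'' where you, correctly, have $f(s)=M(1/s)$ --- and about checking the structural hypotheses, but the skeleton of the argument is identical.
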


\begin{proof}~
Noting that $\mu(0;\infty)<\infty$, by integration by parts $M(\lambda)$ may be rewritten as, using the change of variable $y=\lambda x$, $\displaystyle M(\lambda)=\mu(0;\infty)+\int_0^\infty e^{x}\mu\big(x\big/\lambda;\infty\big)dx$.
Taking $a=-B$, $b=1\big/\alpha$, and $c=1$, gives $d=-B\big(1-1\big/\alpha\big)(B/\alpha)^{1/(\alpha-1)}=\big(1-\alpha\big)(B/\alpha)^{\alpha/(\alpha-1)}$, and putting $P(x)=\mu\big(x;\infty\big)$ and $f=M$ with $A=\mu(0;\infty)$, applying Theorem \ref{teo:main:001}, the corollary then follows.
\end{proof}

\section{Proof of Theorem 3.1}
\label{proof}


Assume the hypothesis given in Theorem \ref{teo:main:001}.

Let $0<\epsilon<\big| d \big|\big/2$.
Note that $ d >0$ if $ b >0$, and $ d <0$ if $ b <0$.

\begin{proof}[Proof of the necessary condition]
Define the function $h(x)= a  x^ b + c  x- d $, $x>0$.
$h$ is continuously differentiable, concave ($h''(x)= a  b ( b -1)x^{ b -2}<0$), and, reaches its maximum at $x_M=\big(- c \big/( a  b )\big)^{1/( b -1)}$ ($>0$) and $h(x_M)=0$, so in particular $h\leq0$.
Hence, there exists $0<\eta<\min(x_M,1)$ such that, for $x\in\big[x_M-\eta;x_M+\eta\big]$, $h(x)\geq-\epsilon\big/3$.

Let $0<\tau<1$ be sufficiently small, to be defined later.

Since the function $P$ satisfies \eqref{eq:20150316:001} there exists $x_0>0$ such that, for $x^\beta\geq x_0^\beta$,
\begin{equation}\label{eq:20150316:003}
\displaystyle
\left|\frac{\log(P(x))}{ a  x^{ b }}-1\right|\leq\tau\textrm{.}
\end{equation}
Write, for $\xi>1$ and $\omega\in\big\{\epsilon,-\epsilon\big\}$, using the changes of variable $v=u\big/\log(\xi)$ and $\psi=\log(\xi)$,
\begin{equation}\label{eq:20150316:004}
\frac{f\big((\log\xi)^{(1- b )/ b }\big)}{\xi^{ d +\omega}} =
Ae^{-( d +\omega)\psi}+\psi e^{-\omega\psi}\int_0^{\infty}P\big(v\psi^{1/ b }\big)e^{( c  v- d )\psi}dv\textrm{.}
\end{equation}

If $\omega=-\epsilon$ and $\psi\geq\big(x_0\big/(x_M-\eta)\big)^ b $,
then, denoting $\zeta=-\sgn(a)\tau$ and $\theta=\sgn(b)\eta$, 
provides
$$
e^{-\omega\psi}\int_0^{\infty}P\big(v\psi^{1/ b }\big)e^{( c  v- d )\psi}dv
 \ \geq \ e^{\epsilon\psi}\int_{x_M-\eta}^{x_M+\eta}e^{\left(h(v)+\zeta  a  v^ b \right)\psi}dv
 \ \geq \ 2\eta e^{\frac{2}{3}\epsilon\psi}e^{\zeta a (x_M+\theta)^ b \psi}\textrm{.}
$$
Combining this and \eqref{eq:20150316:004} give, choosing $\tau<\epsilon\big/\big(3 a (x_M+\theta)^ b \big)$ and noting that $\psi\to\infty$ as $\xi\to\infty$,
$$
\displaystyle \limxi\frac{f\big((\log\xi)^{(1- b )/ b }\big)}{\xi^{ d +\omega}}\ \geq\ 
\limpsi\left(Ae^{-( d +\omega)\psi}+2\eta\psi e^{\frac{2}{3}\epsilon\psi}e^{\zeta  a (x_M+\theta)^ b \psi}\right)\ =\ \infty\textrm{.}
$$
Next, take $\omega=\epsilon$. Then, using the changes of variables introduced above,
$$
\int_0^{\infty}P\big(v\psi^{1/ b }\big)e^{ c  v\psi}dv
 \ =\ \int_0^{x_0\psi^{-1/ b }}P\big(v\psi^{1/ b }\big)e^{ c  v\psi}dv
+\int_{x_0\psi^{-1/ b }}^{\infty}P\big(v\psi^{1/ b }\big)e^{ c  v\psi}dv
 \ =\ I_1(\psi)+I_2(\psi)\textrm{.}
$$

On $I_1$, using the change of variable $y=v\psi^{1/ b }$, 
if $ c <0$, then by hypothesis
$$
I_1(\psi)=\psi^{-1/ b }\int_0^{x_0}P(y)e^{ c  y\psi^{1-1/ b }}dy\leq\psi^{-1/ b }\int_0^{x_0}P(y)dy\textrm{,}
$$
and, if $ c >0$, then necessarily $ a >0$ and $ b >1$, and thus
$$
I_1(\psi)=\psi^{-1/ b }\int_0^{x_0}P(y)e^{ c  y\psi^{1-1/ b }}dy\leq 
\psi^{-1/ b }e^{ c  x_0\psi^{\theta}}\int_0^{x_0}P(y)dy\textrm{,}
$$
for some $0<\theta<1$.
So, we get, taking $\psi>(|c|  x_0)^{1/(1-\theta)}$,
$$
\limpsi \psi e^{-(\epsilon+d)\psi}I_1(\psi)\leq
\limpsi \psi^{1-1/ b } e^{-(\epsilon+ d - c  x_0\psi^{\theta-1})\psi}\int_0^{x_0}P(y)dy=0\textrm{.}
$$
On $I_2$, if $ b <0$, $ c <0$ and one has
$$
I_2(\psi) = \psi^{-1/ b } \int_{x_0}^{\infty}P(y)e^{c  y\psi^{-1/ b }\psi}dy
 = \psi^{-1/ b } \int_{x_0}^{\infty}P(y)e^{ c  y\psi^{1-1/ b }}dy\textrm{,}
$$
which implies that, since $e^{ c  y\psi^{1-1/ b }}$ is decreasing in $y$,
$$
\limpsi \psi e^{-(\epsilon+d)\psi}I_2(\psi) \leq \limpsi \psi^{1-1/ b } e^{-(\epsilon+ d )\psi+ c  x_0\psi^{1-1/ b }}\int_{x_0}^{\infty}P(y)dy=0\textrm{.}
$$
If $ b >0$, denote $\zeta$ as above. Then, using \eqref{eq:20150316:003},
$$
e^{-d\psi}I_2(\psi)
\leq \int_{x_0\psi^{-1/ b }}^{\infty}e^{((1-\zeta) a  v^ b + c  v- d )\psi}dv\textrm{.}
$$
Let $g(x)=(1-\zeta) a  x^ b + c  x- d $, $x\geq0$, and take $\displaystyle \zeta<\sgn(1-b)\left(\left[\frac{\epsilon}{2}\left(-\frac{ c }{ a  b }\right)^{1/(1- b )}+1\right]^{1- b }-1\right)$. 
Then, $g$ is differentiable, concave ($g''(x)=(1-\zeta) a  b ( b -1) x^{ b -2}<0$), and reaches its maximum at $x_g=\big(- c \big/( a  b (1-\zeta))\big)^{1/( b -1)}$,
and $\displaystyle g(x_g)=\big(- c \big/( a  b )\big)^{1/( b -1)}\big[(1-\zeta)^{-1/( b -1)}-1\big]$ ($<\epsilon\big/2$).
Hence, $g-\epsilon\big/2<0$.
This inequality and the integrability of $e^{g(x)-\epsilon/2}$ on $(0;\infty)$  
allow again the application of the reverse Fatou lemma giving
$$
\limpsi\int_0^\infty e^{(g(v)-\epsilon/2)\psi}dv\leq\limsuppsi\int_0^\infty e^{(g(v)-\epsilon/2)\psi}dv
\leq\int_0^\infty\limsuppsi e^{(g(v)-\epsilon/2)\psi}dv=0\textrm{.}
$$
Hence, one has
$$
\limpsi \psi e^{-(\epsilon+d)\psi}I_2(\psi)\leq \limpsi \psi e^{-\frac{1}{2}\epsilon\psi}\int_0^\infty e^{(g(v)-\epsilon/2)\psi}dv=0\textrm{.}
$$
Combining the results on $I_1$ and $I_2$ and \eqref{eq:20150316:004} give
$$
\displaystyle \limxi\frac{f\big((\log\xi)^{(1- b )/ b }\big)}{\xi^{ d -\omega}}\ =\ 
\limpsi\left(Ae^{-( d +\omega)\psi}+\psi e^{-(\omega+d)\psi}I_1(\psi)+\psi e^{-(\omega+d)\psi}I_2(\psi)\right)
\ \leq\ 0\textrm{.}
$$

Therefore, $f$ being positive and measurable, $U(\xi)=f\big((\log\xi)^{(1- b )/ b }\big)\in\M$ with $\rho_U= d $, and then, applying Theorem \ref{teo:main:001},
$$
\limxi\frac{\log\left(f\big((\log\xi)^{(1- b )/ b }\big)\right)}{\log(\xi)}= d \textrm{.}
$$
By using the change of variable $\lambda=(\log\xi)^{(1- b )/ b }$ the assertion follows.
\end{proof}

\begin{proof}[Proof of the sufficient condition]
Let $\epsilon>0$.
Suppose that the function $f$ satisfies \eqref{eq:20150316:002}.
Rewriting this limit as, using the change of variable $\xi=\exp\big\{\lambda^{ b /(1- b )}\big\}$,
$$
\limxi\frac{\log\left(f\big((\log\xi)^{(1- b )/ b }\big)\right)}{\log(\xi)}= d \textrm{,}
$$
this means that, applying Theorem \ref{teo:main:001}, $U\in\M$ with $\rho_U= d $ where $U$ is defined as above.
So, one has
$$
\limxi\frac{f\big((\log\xi)^{(1- b )/ b }\big)}{\xi^{ d +\epsilon}}=0
\quad\textrm{and}\quad
\limxi\frac{f\big((\log\xi)^{(1- b )/ b }\big)}{\xi^{ d -\epsilon}}=\infty\textrm{,}
$$
i.e., using the changes of variable $v=u\big/\log(\xi)$ and $\psi=\log(\xi)$ and denoting $Q(x)=\log(P(x))$,
\begin{equation}\label{eq:20150316:005}
\limpsi \psi\int_0^{\infty}e^{(Q(v\psi^{1/ b })/\psi+ c  v- d -\epsilon)\psi}dv=0
\quad\textrm{and}\quad
\limpsi\psi\int_0^{\infty}e^{(Q(v\psi^{1/ b })/\psi+ c  v- d +\epsilon)\psi}dv=\infty\textrm{.}
\end{equation}
We claim that, given $\psi>0$,
\begin{equation}\label{eq:condA}
\textrm{$Q(v\psi^{1/ b })\big/\psi+ c  v- d \leq0$ almost surely (a.s.) for all $v>0$.}
\end{equation}
Assuming there exist $\nu>0$ and $v_1>0$ such that $Q(v_1 \psi ^{1/ b })\big/ \psi + c  v_1- d \geq2\nu$ a.s.,
this means that there exists $\eta>0$ such that, for $v\in\big[v_1-\eta;v_1+\eta\big]$, $Q(v \psi ^{1/ b })\big/ \psi + c  v- d \geq\nu$.
Hence, taking $\epsilon=\nu\big/2$, one gets
$$
\limpsi\psi\int_0^{\infty}e^{(Q(v\psi^{1/ b })/\psi+ c  v- d -\epsilon)\psi}dv\geq
\limpsi\psi\int_{v_1-\eta}^{v_1+\eta}e^{\nu\psi/4}dv=
\limpsi2\eta\psi e^{\nu\psi/4}=\infty\textrm{,}
$$
which contradicts the first limit in \eqref{eq:20150316:005}.

Furthermore, we claim that, given $\psi>0$, 
\begin{equation}\label{eq:condB}
\textrm{there exists $v_0>0$ such that $Q(v_0\psi^{1/ b })\big/\psi+ c  v_0- d =0$.}
\end{equation}
Assuming for all $v>0$ that $Q(v\psi^{1/ b })\big/\psi+ c  v- d <0$, since \eqref{eq:condA} is satisfied,
then, using the change of variable $z=v\psi^{1/ b }$, gives
$$
Q(z)<\frac{ d - c  v}{v^ b }z^ b \textrm{.}
$$
Now, taking the following limits on $v$ provides, for any $z>0$,
$$
Q(z)\leq
\left\{
\begin{array}{ll}
\displaystyle \lim_{v\to0^+}\frac{ d - c  v}{v^ b }z^ b =0 & \textrm{if $ b <0$} \\
 & \\
\displaystyle \lim_{v\to\infty}\frac{ d - c  v}{v^ b }z^ b =-\infty & \textrm{if $0< b <1$, because $ c >0$} \\
 & \\
\displaystyle \lim_{v\to\infty}\frac{ d - c  v}{v^ b }z^ b =0 & \textrm{if $ b >1$.}
\end{array}
\right.
$$
This implies that $P\equiv1$ if $ b <0$ or $b>1$, and $P\equiv0$ if $0< b <1$, which contradicts the hypothesis \eqref{eq:20150316:001}.

Introducing the change of variable $z=v_0\psi^{1/ b }$ in the relationship given in \eqref{eq:condB} gives, for $z>0$,
$$
Q(z)=\frac{ d - c  v_0}{v_0^ b }z^ b \textrm{.}
$$
This implies that $Q$ is continuously differentiable, concave, and then that $Q(v\psi^{1/ b })\big/\psi+ c  v- d$ has a unique maximum at $v$, i.e. $v_0$.
This maximum satisfies
$$
\psi^{1/ b }\frac{Q'(v_0\psi^{1/ b })}{\psi}+ c =
\psi^{1/ b -1}\frac{ d - c  v_0}{v_0^ b } b \left(v_0\psi^{1/ b }\right)^{ b -1}+ c=0 \textrm{,}
$$
which implies $ b ( d - c  v_0)=- c  v_0$, i.e. $v_0= d  b \big/( c ( b -1))$.
$v_0$ is positive and satisfies $v_0=x_M$.
Straightforward computations gives $\displaystyle  a =\big( d - c  v_0\big)\big/v_0^ b $, so $Q$ can be rewritten as $Q(z)= a  z^ b $.
Hence \eqref{eq:20150316:001} follows.
\end{proof}

\section{Discussion of results}

Our proof of the tauberian theorems given by Kohlbecker, de Bruijn, and Kasahara disects the functioning of these theorems.
A function like $h(x)=a  x^ b + c  x- d $, $x>0$, is identified, which has two key properties in order to establish these theorems: concavity and non-positivity.
The first of these properties gives the possible Tauberian theorems: $ab(b-1)<0$, from which exactly three solutions are possible, each one corresponding to a known Tauberian theorem of exponential type.
The second property guarantees the convergence of integrals of type $\displaystyle \int_0^\infty P(us)e^{cu}du$ and lets the control of this integral at $v_0>0$.
This point satisfies $h(v_0)=0$, the unique maximum of $h$.
Note that if $h(v_0)>0$ or $h(v_0)<0$ one cannot obtain those Tauberian theorems.
From the relationship $h'(v_0)=0$ the condition for $c$ is derived, and from $h(v_0)=0$ the corresponding condition for $d$.
Finally, Theorem \ref{teo:main:001} allows the identification of the disposition of the logarithms of functions.

\section*{Acknowledgments} 
The author gratefully acknowledges the support of SWISS LIFE through its ESSEC research program on 'Consequences of the population ageing on the insurances loss'.


\appendix

\section{Proof of Proposition CK}

Let $U:\Rset^+\to\Rset^+$ be a measurable function.

\begin{proof}[Proof of the necessary condition]
Let $\epsilon>0$ and $U\in\M$ with $\rho_U=\tau$.
One has, by definition, that
$$
\limx\frac{U(x)}{x^{\rho+\epsilon}}=0
\quad\textrm{and}\quad
\limx\frac{U(x)}{x^{\rho-\epsilon}}=\infty\textrm{.}
$$
Hence, there exists $x_0\geq1$ such that, for $x\geq x_0$,
$$
U(x)\leq\epsilon x^{\tau+\epsilon}
\quad\textrm{and}\quad
U(x)\geq\frac{1}{\epsilon}x^{\tau-\epsilon}\textrm{.}
$$
Applying the logarithm function to these inequalities and dividing them by $\log(x)$ (with $x>1$) provide
$$
\frac{\log\left(U(x)\right)}{\log(x)}\leq\frac{\log\left(\epsilon\right)}{\log(x)}+\tau+\epsilon
\quad\textrm{and}\quad
\frac{\log\left(U(x)\right)}{\log(x)}\geq-\frac{\log\left(\epsilon\right)}{\log(x)}+\tau-\epsilon\textrm{,}
$$
and, one then has
$$
\limsupx\frac{\log\left(U(x)\right)}{\log(x)}\leq\tau+\epsilon
\quad\textrm{and}\quad
\liminfx\frac{\log\left(U(x)\right)}{\log(x)}\geq\tau-\epsilon\textrm{,}
$$
from which one gets, taking $\epsilon$ arbitrary,
$$
\tau\leq\liminfx\frac{\log\left(U(x)\right)}{\log(x)}\leq\limsupx\frac{\log\left(U(x)\right)}{\log(x)}\leq\tau\textrm{,}
$$
and the assertion follows.
\end{proof}

\begin{proof}[Proof of the sufficient condition]
Let $\epsilon>0$.
By hypothesis, there exists $x_0>1$ such that, for $x\geq x_0$, $\big|\log(U(x))\big/\log(x)-\tau\big|\leq\epsilon\big/2$.

Writing, for $w\in\big\{\epsilon,-\epsilon\big\}$,
$$
\frac{U(x)}{x^{\tau+w}}
=\exp\left\{\log(x)\times\left(\frac{\log(U(x))}{\log(x)}-\tau-w\right)\right\}
$$
gives
$$
\exp\left\{\log(x)\times\left(-\frac{\epsilon}{2}-w\right)\right\}
\leq
\frac{U(x)}{x^{\tau+w}}
\leq
\exp\left\{\log(x)\times\left(\frac{\epsilon}{2}-w\right)\right\}\textrm{,}
$$
and then,
$$
\limx\frac{U(x)}{x^{\tau+\epsilon}}\leq
\limx\exp\left\{\log(x)\times\left(\frac{\epsilon}{2}-\epsilon\right)\right\}=0
$$
and
$$
\limx\frac{U(x)}{x^{\tau-\epsilon}}\geq
\limx\exp\left\{\log(x)\times\left(\frac{\epsilon}{2}+\epsilon\right)\right\}=\infty\textrm{.}
$$
These two limits provide $U\in\M$ with $\rho_U=\tau$.
\end{proof}

\end{document}